\documentclass[review]{siamart1116}
\usepackage{amsfonts}
\usepackage{amsmath}
\usepackage{algorithm,algorithmic}
\usepackage{bbm}
\usepackage{subfigure}
\usepackage{color}


\usepackage{lipsum}
\usepackage{amsfonts}
\usepackage{graphicx}
\usepackage{epstopdf}
\usepackage{algorithmic}
\ifpdf
  \DeclareGraphicsExtensions{.eps,.pdf,.png,.jpg}
\else
  \DeclareGraphicsExtensions{.eps}
\fi

\numberwithin{theorem}{section}

\newcommand{\TheTitle}{Bayesian Filtering for ODEs with bounded derivatives} 
\newcommand{\TheAuthors}{E. Magnani, H. Kersting, M. Schober and P. Hennig}

\headers{\TheTitle}{\TheAuthors}

\title{{\TheTitle}}

\author{
  Emilia Magnani
  \and
  Hans Kersting
  \and
  Michael Schober
  \and
  Philipp Hennig \thanks{Max-Planck-Institute for Intelligent Systems, T\"ubingen, Germany
    (\email{emagnani@tue.mpg.de}, \email{hkersting@tue.mpg.de}, \email{mschober@tue.mpg.de}, \email{ph@tue.mpg.de}).}.
}

\usepackage{amsopn}


\ifpdf
\hypersetup{
  pdftitle={\TheTitle},
  pdfauthor={\TheAuthors}
}
\fi


\externaldocument{ex_supplement}


\begin{document}

\maketitle

\begin{abstract}
Recently there has been increasing interest in probabilistic solvers for ordinary differential equations (ODEs) that return full probability measures, instead of point estimates, over the solution and can incorporate uncertainty over the ODE at hand, e.g.~if the vector field or the initial value is only approximately known or evaluable.
The \emph{ODE filter} proposed in \cite{Kersting2016UAI,2016arXiv161005261S} models the solution of the ODE by a Gauss-Markov process which serves as a prior in the sense of Bayesian statistics.
While previous work employed a Wiener process prior on the (possibly multiple times) differentiated solution of the ODE and established equivalence of the corresponding solver with classical numerical methods, this paper raises the question whether other priors also yield practically useful solvers.
To this end, we discuss a range of possible priors which enable fast filtering and propose a new prior--the Integrated Ornstein Uhlenbeck Process (IOUP)--that complements the existing Integrated Wiener process (IWP) filter by encoding the property that a derivative in time of the solution is bounded in the sense that it tends to drift back to zero.
We provide experiments comparing IWP and IOUP filters which support the belief that IWP approximates better divergent ODE's solutions whereas IOUP is a better prior for trajectories with bounded derivatives.
\end{abstract}

\begin{keywords}
  probabilistic numerics, initial value problems, numerical analysis, Bayesian filtering, Gaussian processes, Markov processes
\end{keywords}

\begin{AMS}
60H30, 62F15, 62M05, 65C20, 65L05, 65L06
\end{AMS}

\section{Introduction}
Ordinary differential equations (ODEs) are an extensively
studied mathematical problem.  Dynamical systems are described by  ODEs, and the application of these extends to many different fields of science and engineering.  
Given the initial value proplem (IVP) on the time interval $[0,T]\subset \mathbb{R}$ \begin{equation} \label{IVP}
 x'(t) = f(t,x(t)), \qquad x(0) = x_0 \in \mathbb{R}^d  \end{equation}
with $f: [0,T]\times\mathbb{R}^d\longrightarrow\mathbb{R}^d$ globally Lipschitz continuous, we want to compute a numerical approximation $\hat{x}: [0,T]\longrightarrow\mathbb{R}^d$ of the solution $x:[0,T]\longrightarrow\mathbb{R}^d$ using a probabilistic approach. Solving ODEs is one of the major tasks of numerical mathematics.
Classical numerical ODE solvers (e.g. Runge Kutta methods) construct the solution $\hat{x}$ by iteratively evaluating the vector field $f$  at discrete times $t_1,\dots,t_N$ at numerical approximations $\hat{x}(t_n)$ and collecting information on the derivative $x'$.
As pointed out in the past \cite{poincare1896,diaconis88:_bayes,ohagan92:_some_bayes_numer_analy}, to incorporate uncertainties that arise due to the inaccurate observation of $x'(t)=f(t,\hat{x}(t))$ and consequently accumulated numerical error,  we can  interpret the observed evaluations of $f$ as (potentially noisy) data for statistical inference. 
For quantifying the uncertainty on $\hat{x}$ we model the analytic solution as a Gauss-Markov stochastic process $X$ defined on a probability space $(\Omega,\mathcal{F},\mathbb{P})$. The iterative algorithm then outputs the law of $X$, i.e. the probability measure that the process induces on the collection of functions from $[0,T]$ to the state space $\mathbb{R}^d$. This probability measure contains the belief over $x$.\\
From this viewpoint, unknown numerical quantities are modelled as random variables and numerical algorithms return probability distributions instead of points estimates. This is a probabilistic approach which carries out numerical computations in a statistically interpretable way by combining probability theory and numerics, and permits to quantify the uncertainties for the computations. This idea became a dynamic research area which includes optimization, linear algebra and differential equations and is called \textit{Probabilstic Numerics} (PN) \cite{HenOsbGirRSPA2015}.\\
A PN solver for ODEs was introduced by \cite{HennigAISTATS2014} who recast the inference of the solution as a Gaussian process (GP) regression.
However, their original solver did not satisfy the polynomial convergence rates of classical solvers. 
In parallel development, \cite{o.13:_bayes_uncer_quant_differ_equat} proposed a similar solver of similar structure, which uses a Monte Carlo, instead of a closed-form Gaussian, updating scheme.
This solver produced solutions of linear order, but still not of polynomial order.
In \cite{schober2014nips}, the polynomial convergence rate was established by choosing an Integrated Wiener Process (IWP) prior.
This solver was further amended by adding calibrated uncertainty quantification by Bayesian quadrature \cite{Kersting2016UAI} and using the Markov property of the IWP to recast it as a filter, thereby drastically speeding up the probabilistic computations \cite{Kersting2016UAI,2016arXiv161005261S}. 
While the resulting filter is the fastest probabilistic ODE solver available, it outputs solely Gaussian probability measures.
Another novel method introduced in \cite{CGSSZ15} provides a more flexible non-parametric uncertainty quantification by defining a generative process to sample random numerical solutions of the ODE, obtained by adding Gaussian noise to the solution of a classical solver.
The computational cost of this method per sample, however, is equal to the cost of the underlying solver, which makes it impractical for cost-sensitive applications.\\
In this paper we recall the solver in \cite{schober2014nips,Kersting2016UAI} which employs a $q$-times integrated Wiener process prior on the solution and introduce instead an Integrated Ornstein Uhlenbeck (IOU) process prior.  This new model improves the algorithm for ODEs whose trajectories have a higher derivative which is expected to gravitate back to zero. In the following, we will call such systems 'ODEs with bounded derivatives'. 

\section{Gaussian filtering for ODEs} 
\subsection{Bayesian filtering problem}
Assume we have a \textit{signal process} $X = \\ \lbrace{X_t : 0\leq t\leq T}\rbrace$ which is not directly observable, solution of the stochastic differential equation (SDE) \begin{equation}
dX_t = b(t, X_t)dt + \sigma(t,X_t)dW_t
\end{equation} where $W$ denotes a Wiener process. Instead, we can only see an \textit{observation process} $Z = \lbrace{Z_t : 0\leq t\leq T}\rbrace$ that is a transformation of $X$.
 Both processes are defined on some probability space $(\Omega,\mathcal{F},P)$. 
The goal of the filtering problem is to compute, given the observations
 $ \lbrace{Z_s : 0\leq s\leq t}\rbrace$, the $L^2$-optimal estimate $\hat{X}_t$ of $X_t$ for $t \in [0,T]$. Accordingly, $\hat{X}_t$ has to be measurable w.r.t. the natural filtration $\mathcal{F}^Z_t$ and minimize the mean-square distance between $X_t$ and the candidates in  $L^2(\Omega,\mathcal{F}^Z_t,P)$ :  \begin{equation}
 \mathbb{E} {\left \Vert X_t - \hat{X}_t \right \Vert}^2 = \min_{Y \in L^2(\Omega,
 \mathcal{F}^Z_t,P)} \mathbb{E} {\left \Vert X_t - Y \right \Vert }^2.
 \end{equation}
 The best mean square estimate of $X_t$ given $Z_{s \leq t }$ is known to be  the conditional expectation $\hat{X}_t = \mathbb{E}[X_t \mid \mathcal{F}^Z_t]$ \cite{karatzas1991brownian,oksendal2003stochastic}. 
 In a Bayesian spirit, we seek to compute the posterior distribution of $X_t$ given the current and previous noisy observations $Z_{s \leq t}$. This can be done by using Bayes' rule \begin{equation}
 p(X \mid Z) = \frac{p(Z \mid X) p(X)}{p(Z)}.
 \end{equation} The distribution $p(Z \mid X) $ is the measurement model which describes the noisy relationship between the true signal and the observations, and $p(X)$ is the prior containing preliminary information on $X$ before any observation are taken into account. A natural question is then: which prior should we put on the signal process $X$?
Due to the computationally advantageous properties of Gaussian distributions which allow for fast inference by matrix computations we assume $X$ to be a Gaussian process, i.e. to have finite joint Gaussian distribution. Moreover we want to iteratively update the solution from $X_{t_i}$ to $X_{t_{i+1}}$ without taking into account the previous observations at time steps $t_j < t_i$. This is possible when $X$ is also a Markov process, since otherwise the computational cost per step would exponentially increase in the number of previous steps.
Sample continuous Gauss-Markov processes can be written as the solution of the linear time-invariant stochastic differential equation (LTI-SDE) of the form
\begin{equation}
dX_t = FX_tdt + LdW_t \label{eq:SDE}
\end{equation}
with Gaussian initial condition $X_0 \sim \mathcal{N}(m_0,P_0)$.
 The matrix $F \in \mathbb{R}^{(q+1)\times(q+1)}$ is the so called drift matrix and $L\in \mathbb{R}^{q+1}$ is the diffusion matrix of the process.
 The solution $X$ of \cref{eq:SDE} depends on the form of $F$ and $L$. 
With this conditions the distribution of the estimate $\hat{X}_t$ can be computed by Kalman filtering \cite{Sarkka2013bayesian}, an iterative algorithmic implementation for the linear filtering problem with Gaussian model assumptions. Since it is essentially an application of Bayes' rule, it is determined by the choice of the prior $p(X)$ and the measurement model $p(Z \mid X)$. Previous work \cite{Kersting2016UAI,schober2014nips,2016arXiv161005261S} employed an IWP prior, discussed in \cref{sec:IWP}, whereas in this paper we will analyse Bayesian filtering for ODEs with an IOUP prior.   Due to the properties of Gaussian distributions, together with Gaussian measurement model assumptions, we can analytically compute the matrices for Gaussian filtering  for LTI SDE like \cref{eq:SDE} (see \cite{sarkka2006thesis}).

\subsection{Filtering for ODEs}\label{filteringODEs}
We use Bayesian filtering for computing a numerical estimation $\hat{x}$ of the solution of the IVP \cref{IVP}. We treat the imprecise measurements $z_n = f(t_n,\hat{x}(t_n))$ as noisy observations. Since $\hat{x}_t$ is a numerical approximation of the true solution $x_t$ at time $t$ the evaluations of $f$ are indeed noisy. Therefore, we model the solution and the first $q$ derivatives  $(x, x', x'', \dots ,x^{(q)}) : [0,T]\rightarrow \mathbb{R}^{(q+1)\times d}$  as a draw of a Gauss Markov stochastic process  ${X = (X_t)}_{t \in [0,T]} = {({X_t}^{(0)}, \dots , {X_t}^{(q)})}_{t \in [0,T]}$. 
Hence, the SDE in \cref{eq:SDE}, which describes the dynamics of $X_t$,  has the following form 

\begin{equation}\label{eq:matrices}
\begin{pmatrix}
dX_t^{(0)}\\dX_t^{(1)}\\ \vdots\\dX_t^{(q-1)}\\dX_t^{(q)}\\
\end{pmatrix} = \overbrace{\begin{pmatrix}
0&1&0&\cdots&0\\
0&0&1&\cdots&0\\
\vdots& & \ddots&\ddots&\vdots \\
0& &\cdots&0&1\\
f_{q,0}&f_{q,1}&\dots&\dots&f_{q,q}\\
\end{pmatrix}}^F
\begin{pmatrix}
X_t^{(0)}\\X_t^{(1)}\\ \vdots\\X_t^{(q-1)}\\X_t^{(q)}\\
\end{pmatrix}dt
+  \overbrace{\begin{pmatrix}
0\\0\\\vdots\\0\\\sigma\\
\end{pmatrix}}^L dW_t.
\end{equation} 
 
The drift matrix $F \in \mathbb{R}^{(q+1)\times(q+1)}$ has non-zero entries only on the first-off diagonal and on the last row. This form of $F$ guarantees that the $i$-th component is the derivative of the $(i-1)$-component in the state space.  The parameters $\lbrace f_{q,0}, \dots, f_{q,q} \rbrace$ determine the process employed on the $q$-the component of the state space $X$: they are all equal to zero for an integrated Wiener process, equal to zero apart from $f_{q,q} =  \theta$ for the integrated OUP with some $\theta <0$, whereas for the Matern process \cite{hartikainen2010kalman} we have $f_{q,i} \neq 0 , i=0,\dots q$ . 
Inference in \cref{eq:matrices} with linear Gaussian measurements models $z_n = x'_n +r, r\sim\mathcal{N}(0,R)$ gives rise to the Kalman Filter equations implemented in Algorithm \ref{alg:algorithm1}. The algorithm outputs the \textit{filtering distribution} $p(X_{t_n} | z_{[n]})$, where $z_{[n]}:=\lbrace z_i |i\leq n \rbrace$.

\begin{algorithm} 
\caption{Kalman ODE filtering}
\label{alg:buildtree}
\begin{algorithmic}[1]
\STATE Choose $q \in \mathbb{N}$, $F$ and $L$ \COMMENT{Choose prior model}\label{line1}
\STATE{Initialize $t_0=0$, $m_0=\mathbb{E}[x_0]$, $P_0=Cov[x_0]$ }
\STATE{Choose $h:=t_n-t_{n-1}$}
\STATE{Compute $A(h)$ and $Q(h)$}
\FOR{$n=1$ to $N$}
\STATE{$m^{-}_{t_n} \leftarrow  A(h)m_{t_{n-1}}$} \COMMENT{Predict}\label{line6}
\STATE{$ P^{-}_{t_n} \leftarrow A(h)P_{t_{n-1}}A(h)^T + Q(h) $ }\label{line7}
\STATE{$ z_n \leftarrow f(t_n, m^{-}_{t_n}) $} \COMMENT{Collect evaluations ('data')} 
\STATE{$ v_n \leftarrow  z_n - Hm^{-}_{t_n} $}\label{line9} \COMMENT{Update}
\STATE{$ S_n \leftarrow  HP^{-}_{t_n}H^T + R $}
\STATE{$ K_n \leftarrow P^{-}_{t_n}H^TS^{-1} $}
\STATE{$ m_{t_n} \leftarrow m^{-}_{t_n} + K_{n}v_n $}
\STATE{$ P_{t_n} \leftarrow P^{-}_{t_n} - K_nS_nK_n^T $}\label{line13}
\ENDFOR
\RETURN ${(m_{t_n},P_{t_n})}_{n=0,\dots N}$
\end{algorithmic} \label{alg:algorithm1}
\end{algorithm}
The transition matrices $A(h) \in \mathbb{R}^{(q+1)\times(q+1)}$ and $Q(h) \in \mathbb{R}^{(q+1)\times(q+1)}$ in lines \ref{line6}-\ref{line7} are given by (\cite{sarkka2006thesis}) \begin{equation} \label{eq:A}
  A(h) = \exp(Fh)
  \end{equation}
  \begin{equation} \label{eq:Q}
  Q(h) = \int_0^t \exp(F\tau)LL^T\exp(F\tau)^T d\tau
  \end{equation}
  where $F$ and $L$ come from \cref{eq:matrices}. The matrix $H$ is the measurement model matrix (in probabilistic terms $p(z_k \mid x_k)\sim\mathcal{N}(H_kx_k,R)$), in our case the $n$-th unit vector $e^T_n \in \mathbb{R}^{1\times q}$.
In line \ref{line1} a prior model is selected. Lines \ref{line6}-\ref{line7}  compute the \textit{prediction distribution} $p(X_{t_n}\mid z_{[n-1]})=\mathcal{N}(m^{-}_{t_n}, P^{-}_{t_n})$, which depends from the form of the matrices $A(h)$ and $Q(h)$ and is therefore strictly related to the choice of the prior. Lines \ref{line9}-\ref{line13} are known as \textit{update step} and compute the updated distribution $p(X_{t_n}\mid z_{[n]})=\mathcal{N}(m_{t_n}, P_{t_n})$. In the experiments we will assume for simplicity $R=0$, i.e. the numerical approximation is exact.

\subsection{Prior selection: \textit{Integrated Wiener process}} \label{sec:IWP}
The standard Brownian motion $W_t$ is a Gauss Markov process with mean $\mu = 0$ and variance  $Var(W_t) = \sigma^2t$. Samples of the process are depicted in \cref{fig:samples} (b).
Previous work \cite{Kersting2016UAI,schober2014nips,2016arXiv161005261S} worked with an IWP prior for probabilistic ODE solvers. In this case $A(h)$ and $Q(h)$ are given by \cite{schober2014nips} \begin{equation}
A(h)_{i,j} = e^{Ah}_{i,j} = \mathbbm{1}_{j\geq i}\frac{h^{j-i}}{(j-i)!}
\end{equation}
\begin{equation}
Q(h)_{i,j} = \sigma^2 \frac{h^{2q+1-i-j}}{(2q+1-i-j)(q-i)!(q-j)!}.
\end{equation}
This work showed the useful uncertainty quantification for the solution $x$ with only a small computational overhead compared to classical numerical methods. In particular, Schober et al.\cite{schober2014nips} constructed a class of probabilistic solvers whose posterior mean have the same analytic structure of the one of the classic numerical Runge-Kutta methods (for first, second and third order), thus acquiring their good properties.

\section{Ornstein Uhlenbeck process}
Can we use other Gauss Markov priors over the solution which preserve good convergence rates? In this paper we model the solution function and the first $q$  derivatives as a draw from an \textit{integrated Ornstein Uhlenbeck process (IOUP)} to perform inference in ODEs. We show that this model is competitive with the IWP solver discussed in the previous section.
The Ornstein Uhlenbeck process (OUP) is a Gauss-Markov process whose dynamic is governed by the following SDE 
\begin{equation}
dX_t = \theta(X_t-\mu)dt + \sigma dW_t
\end{equation}
where $\theta <0$ and $W_t$ denotes a standard Wiener process.
The OUP converges to its stationary distribution $\mathcal N(\mu,
{-\sigma^2}/{2\theta})$ in the sense that $p(X_t) \to \mathcal N(\mu,{-\sigma^2}/{2\theta})$, for $t \to \infty$. 
The difference between the Wiener process and the Ornstein Uhlenbeck is the drift term $\theta(X_t-\mu)$, which is constant for the Wiener process and dependent on the parameter $\theta$ for OUP.
If the current value of the process is smaller than the value of the mean $\mu$, the drift $\theta(X_t-\mu)$ is positive and the process tends to drift back to $\mu$. The same (symmetric) effect occurs if the current value of $X_t$ is greater than $\mu$. In other words, the samples evolve around the mean which acts as an equilibrium level. Some samples of the process are shown in \cref{fig:samples} (a).
This property is called "mean reverting" and the magnitude of $\theta$ determines the strength of this drift. For small values of $\mid\theta\mid$ the effect vanishes, whereas, for large $\mid\theta\mid$, $X_t$ stays close to the mean $\mu$ with high probability. This behaviour justifies the fact that the variance of the OUP is bounded, in fact $Var(X_t) = \sigma^2/2\theta(e^{2\theta t}-1)$ bounded by $\mid{\sigma^2}/{2\theta}\mid$, whereas the Wiener process variance is unbounded ($Var(W_t)=\sigma^2t$) and the samples don't tend to remain  around the mean.  \\

\begin{figure} \label{fig:samples}
 \centering
 \subfigure[Samples of an Ornstein Uhlenbeck process with initial position $\mu=0$, variance $\sigma^2=1$ and $\theta=-1$.The samples tend to remain close to the mean $\mu$ (mean reverting property).] 
   {\includegraphics[width=6.3cm]{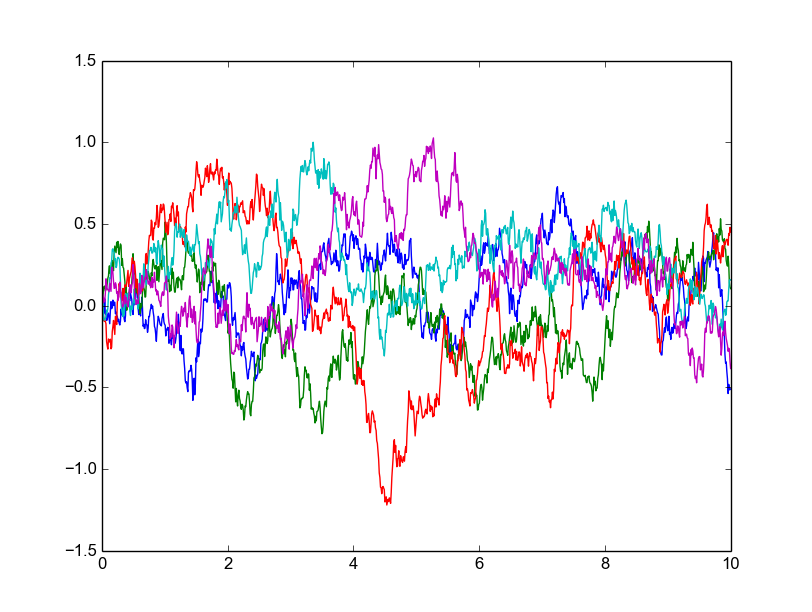}}
 \hspace{0.3mm}
 \subfigure[Samples of a Wiener process with initial mean $\mu=0$ and variance $\sigma^2=1$. The samples do not 'drift back' to the mean.]
   {\includegraphics[width=6.3cm]{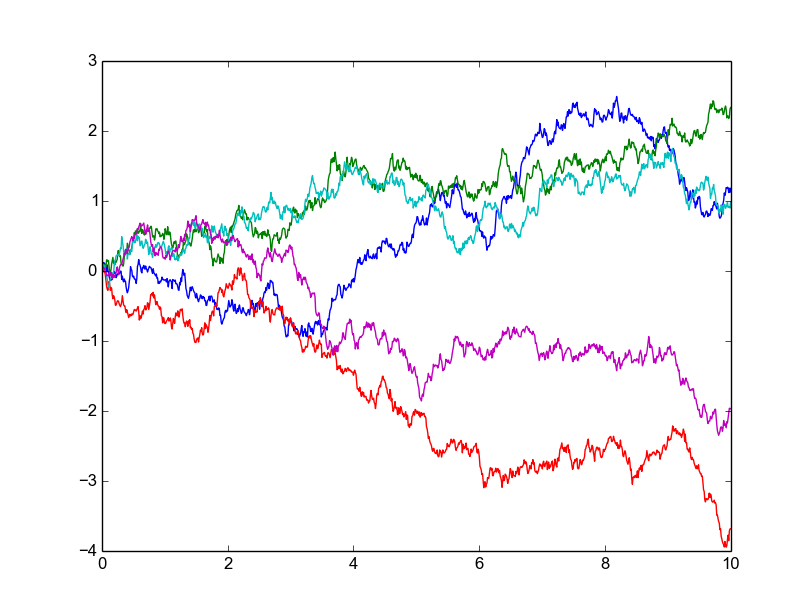}}
 \caption{}
 \end{figure}

\subsection{Bayesian Filtering with \textit{Integrated Ornstein Uhlenbeck process}}\label{sec:FiltIOUP} We analyse the  probabilistic interpretation of ODEs by filtering investigated in \cref{filteringODEs} in the case of a $q$-times integrated Ornstein Uhlenbeck process. \\
To this end, we \textit{a priori} assume that the solution of the ODE and the first $q$  derivatives $(x, x', x'', \dots ,x^{(q)}) : [0,T]\rightarrow \mathbb{R}^{(q+1)\times d}$ follow a $q$-times integrated Ornstein Uhlenbeck process ${X = \allowbreak (X_t)}_{t \in [0,T]}$  $= {({X_t}^{(0)}, \dots , {X_t}^{(q)})}_{t \in [0,T]}$ with initial conditions $X_0 \sim (m_0,P_0)$. Accordingly, the drift matrix $F$ in \cref{eq:matrices} has entries $ \lbrace f_{q,0}, \dots, f_{q,q} \rbrace = \lbrace0, \dots,0, \theta\rbrace  $, $\theta<0$, on the last row. Using this prior model the matrices $A(h)$ and $Q(h)$ can be computed analytically (see appendix) from \cref{eq:A} and \cref{eq:Q} and have the following form:
 \begin{equation}
 A(h)_{i,j} = {e^{hF}}_{i,j} =\begin{cases} \mathbbm{1}_{j\geq i} \frac{h^{j-i}}{(j-i)!}, & \mbox{if }j\neq q \\ \frac{1}{\theta^{q-i}}\left( e^{\theta h} - \sum_{k=0}^{q-i-1}\frac{{(\theta h)}^k}{k!}\right) , & \mbox{if }j=q

\end{cases} 
 \end{equation}
\begin{equation}
\begin{split}
&Q_{i,j}(h) =  \frac{\sigma^2}{\theta^{2q-i-j}} \Bigg\lbrace \frac{e^{2\theta h}-1}{2\theta} - \sum_{k=0}^{q-i-1}\left[  (-1)^{k}\frac{\left(  e^{\theta h} - 1\right)}{\theta}  + \sum_{l=1}^{k} (-1)^{k-l}\frac{\theta^{l-1}e^{\theta h}h^l}{l!}\right]- \\& 
\sum_{k=0}^{q-j-1}{\left[{(-1)}^{k}\frac{\left(  e^{\theta h} - 1\right)}{\theta}  + \sum_{l=1}^k (-1)^{k-l} \frac{\theta^{l-1}e^{\theta h}h^l}{l!}\right]} + \sum_{k_1=0}^{q-i-1}\sum_{k_2=0}^{q-j-1} \frac{{\theta}^{k_1+k_2}}{k_1! k_2!}  \frac{h^{k_1+k_2+1}}{k_1+k_2+1} \Bigg\rbrace
\end{split}
\end{equation}
A probabilistic solver with IOUP prior has local convergence rate for the predicted posterior mean of the same order as the solver with an IWMP prior:
\begin{proposition}
	The order of the prediction in the first step 
	\begin{equation}
		\left \Vert m(h)_0^- - x(h)  \right \Vert \leq Kh^{q+1},
	\end{equation}
	for some constant $K>0$ independent of $h$.

\end{proposition}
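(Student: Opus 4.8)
The plan is to recognize that $m(h)_0^-$ is just the first component (index $0$) of the predicted mean $A(h)m_0$, where $m_0=(x(0),x'(0),\dots,x^{(q)}(0))^T$ collects the exact initial state and its derivatives. Since only the first row of $A(h)$ enters this component, I would write
\begin{equation}
  m(h)_0^- = \sum_{j=0}^{q-1}\frac{h^j}{j!}\,x^{(j)}(0) + A(h)_{0,q}\,x^{(q)}(0),
\end{equation}
separating the $j<q$ entries (which coincide with the IWP Taylor weights $h^j/j!$) from the single IOUP-specific entry $A(h)_{0,q}$ in the last column. The whole argument then reduces to expanding $A(h)_{0,q}$ in powers of $h$ and comparing against the Taylor expansion of the true solution.

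The key algebraic step is to expand the closed form $A(h)_{0,q}=\theta^{-q}\big(e^{\theta h}-\sum_{k=0}^{q-1}(\theta h)^k/k!\big)$. Recognizing the bracket as the tail of the exponential series, $e^{\theta h}-\sum_{k=0}^{q-1}(\theta h)^k/k!=\sum_{k\geq q}(\theta h)^k/k!$, I obtain
\begin{equation}
  A(h)_{0,q} = \sum_{k\geq q}\frac{\theta^{\,k-q}h^k}{k!} = \frac{h^q}{q!} + \frac{\theta\,h^{q+1}}{(q+1)!} + O(h^{q+2}).
\end{equation}
The leading coefficient $h^q/q!$ is exactly the IWP weight, so the IOUP prediction agrees with the $q$-th order Taylor polynomial of $x$ about $0$ up to a correction of size $O(h^{q+1})$, whose leading term is $\theta\,x^{(q)}(0)\,h^{q+1}/(q+1)!$.

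It then remains to compare with the exact solution. Assuming $x\in C^{q+1}([0,T])$ (which requires $f$ to be smooth enough that $x^{(q+1)}$ exists and is bounded on $[0,T]$), Taylor's theorem with Lagrange remainder gives $x(h)=\sum_{j=0}^{q} h^j x^{(j)}(0)/j! + h^{q+1}x^{(q+1)}(\xi)/(q+1)!$ for some $\xi\in(0,h)$. Subtracting, the common $q$-th order Taylor polynomial cancels, leaving
\begin{equation}
  m(h)_0^- - x(h) = \frac{h^{q+1}}{(q+1)!}\Big(\theta\,x^{(q)}(0) - x^{(q+1)}(\xi)\Big) + O(h^{q+2}).
\end{equation}
Both surviving contributions are $O(h^{q+1})$, so taking $K$ to be any uniform bound on the bracketed factor together with the higher-order series tail over $h\in[0,h_{\max}]$ yields $\|m(h)_0^- - x(h)\|\leq Kh^{q+1}$ with $K$ independent of $h$.

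The main obstacle I expect is bookkeeping rather than depth: one must verify that the $O(h^{q+2})$ remainder arising from the infinite series in $A(h)_{0,q}$ is genuinely uniform in $h$ (straightforward, since $\theta<0$ is fixed and the series converges absolutely), and one must secure enough regularity of $x$ to invoke the Lagrange remainder at order $q+1$. The essential conceptual point — and the reason the IOUP matches the IWP rate — is that the drift parameter $\theta$ perturbs the transition weights only at order $h^{q+1}$, one order beyond the $q$-th Taylor polynomial that both priors reproduce exactly.
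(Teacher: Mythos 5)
Your proof is correct and follows essentially the same route as the paper's: expand the first row of $A(h)$ applied to the exact initial state $(x(0),x'(0),\dots,x^{(q)}(0))^T$, recognize $A(h)_{0,q}=\theta^{-q}\sum_{k\geq q}(\theta h)^k/k! = h^q/q! + O(h^{q+1})$ as the tail of the exponential series, and subtract the Taylor expansion of $x(h)$ so that the $q$-th order Taylor polynomial cancels, leaving an $O(h^{q+1})$ residual. If anything, your write-up is more careful than the paper's, which leaves the $C^{q+1}$ regularity requirement and the uniformity of the remainder bound implicit.
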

\begin{proof}
	\begin{align}
		m^-(h)_0 
		=& 
		\sum_{i=0}^{q+1} A_{0,i}m_i 
		=
		m_0 + \sum_{k=1}^{q-1} \frac{h^k m_k}{k!} + \frac{m^q}{\theta^q} \sum_{k=q}^\infty \frac{{(\theta h)}^k}{k!}
		\\ 
		=&
		m_0 + \sum_{k=1}^{q-1} \frac{h^k m_k}{k!} +m^q \sum_{k=q}^\infty \frac{h^k \theta^{k-q}}{k!}
		\\ 
		=&
		m_0 + hm_1 +\frac{h^2}{2!}m_2 + \dots + \frac{h^{q-1}}{(q-1)!}m_{(q-1)} + \frac{h^q}{q!}m_q + O(h^{q+1})
		\\ 
		=&
		\underbrace{x_0 + hf(0,x_0) + \dots + \frac{h^{q-1}}{(q-1)!}f^{(q-1)}(0,x_0) + \frac{h^q}{q!}f^q(0,x_0)}_{\text{first $q$ components of the Taylor expantion of $x(h)$}} + \mathcal{O}(h^{q+1}) \label{sm7}
	\end{align}
	By writing the Taylor expansion of $x$ and subtracting it from \cref{sm7}, we obtain the desired order for the truncation error.
\end{proof}
As mentioned above, we want to  exploit of the 'mean reverting property' of OUP (and absent in WP), or equivalently of the fact that OUP has bounded variance, for solving ODEs whose trajectories have bounded derivatives. Since we use a $q$-times IOUP prior over the solution, we want that also the integral of the process is maintained the same behaviour, i.e. tends to stay closer to the mean than the WP. This is indeed the case, as we proved that $Var(\int_0^T X_s ds)\leq Var(\int_0^T W_s ds)$ $\forall T>0$ (see appendix).

\section{Experiments} \label{experiments}
In this section we analyse applications of the probabilistic ODE solver with IOUP prior discussed in \cref{sec:FiltIOUP}. In particular we compare  this solver with the one considered in \cref{sec:IWP}, which uses an IWP prior over the solution $x$, by applying the two probabilistic ODE solvers to the same ODEs and examining the plots. The experiments aim to test the intuition that the selection of the prior depends on the structure of the ODE's solution: when the solution looks 'mean-reverting', in the sense that it has bounded derivatives, we expect better results from the IOUP prior, whereas we should choose an IWP prior when solutions to ODEs have unbounded derivatives or look divergent. According to these considerations \cref{table1} shows our prior belief  on which solver (IOUP or IWP) performs better for which of the following ODEs.

\begin{itemize}
\item[1.]{\textit{Linear ODEs}}
\begin{itemize}
\item[\textit{a)}]{Exponential function}
 \begin{flalign*}
     &x'(t) = x \\
     &x(0) = 1 &&    
  \end{flalign*}
\item[\textit{b)}]{Negative exponetial function}
 \begin{flalign*}
     &x'(t) = -x \\
     &x(0) = 1 &&    
  \end{flalign*} 
\end{itemize}
\item[2.]{\textit{Oscillators}}
\begin{itemize}
\item[\textit{a)}]Orbit equations \cite{hull1972comparing} \\
 $y'_0=y_2 \ \ \ \ \ \ \ \ \ \ \ \ \ \ \ \ \ \ \ \ \ \ \ \ \ \ \ \ \  y_0(0)=1-\varepsilon$\\
    $y'_1=y_3 \ \ \ \ \ \ \ \ \ \ \ \ \ \ \ \ \ \ \ \ \ \ \ \ \ \ \ \ \  y_1(0)=0$\\
    $y'_2=-y_0/{(y_0^2+y_1^2)}^{3/2} \  \ \ \ \ \ \ \ \ \  y_2(0)=0$\\
        $y'_3=-y_1/{(y_0^2+y_1^2)}^{3/2} \  \ \ \ \ \ \ \ \ \  y_3(0)=\sqrt{\frac{1+\varepsilon}{1-\varepsilon}}$\vspace{.2cm}\\
$\varepsilon = .1$ (epsilon is the eccentricity of the orbit) \vspace{.2cm}
  \item[\textit{b)}] Van der Pol oscillator \begin{equation*}
  x''(t) -\mu(1-x^2)x'(t) + x = 0 \ \ \ \ x(0)=0
\end{equation*}       
\end{itemize}

\item[3.]{\textit{Moderate systems}} \cite{hull1972comparing} \\
\\
$\begin{pmatrix}
  \ \  x'_0 \  \\
  \ \ x'_1 \    \\
  \vdots  \\
  \vdots \\
   \ \ x'_8 \    \\
  \ \  x'_9 \    
 \end{pmatrix} = \begin{pmatrix}
 -1 &  &  &  &  & &   \\
  1 & -2 &  & & 0 &  \\
      &   2  &-3 &    \\
    &   & \ddots & \ddots &   \\
    
     &  0 & & \ddots & -9 &                                  \\
   &  &  &  &  9 & 0 \\
 \end{pmatrix}$
  $\begin{pmatrix}
  \ \  x_0 \  \\
  \ \  x_1 \    \\
  \vdots  \\
  \vdots \\
   \ \ x_8 \    \\
  \ \  x_9 \    
 \end{pmatrix}$ , \ \ \ \    $x(0) = \begin{pmatrix}
  1 \\
  0 \\
  \vdots \\
  \vdots \\
   0  \\
  0
 \end{pmatrix}$ 
 \\
\end{itemize}

\vspace{.8cm}
For the experiments the IOUP and IWP filters are initialized with  mean  $m_0={(x_0,f(0,x_0), 0, \dots, 0)}^T$  and variance $P_0$ with non-zero entries only in the minor matrix ${P_0}_{ord:(q+1),ord:q+1} = I$ where  \textit{ord} is the order of the ODE and $I \in \mathbb{R}^{(q+1-ord)\times(q+1-ord)}$ is the identity matrix.
\begin{table}[h!]\label{table1}
\centering
\begin{tabular}{|p{6cm}|c|c|} 
\hline
ODE & {\color{teal}IWP-solver} & {\color{red}IOUP-solver}  \\ [0.5ex] 
 \hline
1. a) Exponential function  & $\times$    &\\
1. b) Negative exponential function&  & $\times$ \\
2. Oscillators & & $\times$\\
3. Moderate systems  & & $\times$\\
 \hline 
\end{tabular}
\caption{Expectations of which solver --- IWP or IOUP--- should work better on which ODE, based on the intuition described above }
\label{table:1}
\end{table}

\subsection{Linear ODEs}
The solution of \textit{a)}, the exponential function, has  a divergent trajectory with unbounded derivatives and as we expected  the plot in \cref{fig:linearodes} (a) shows that the IWP-prior solver approximates the solution better than the IOUP-prior.  
On the contrary, the negative exponential function, solution of \textit{b)}, decays to zero. This resembles the behaviour of the mean-reverting samples of the IOU process, which should therefore work as a good prior over $x$. The plot in \cref{fig:linearodes} (b) confirms this intuition: the IOUP-solver approximates better the solution than the IWP-solver. Both methods use a model of step size $h=0.5$.\\
The plot in \cref{fig:linearodes} (b) clearly shows the behaviour of the two solvers: IWP oscillates much more than IOUP around the solution when this converges to zero, because the IWP doesn't tend to 'drift back' to the mean, whereas the IOUP solver remains closer to the solution due to the process drifting property and smaller variance. 

 \begin{figure} \label{fig:linearodes}
 \centering
 \subfigure[Positive exponential, $q=2$, $\theta=-\frac{3}{2}$]
   {\includegraphics[width=6.1cm]{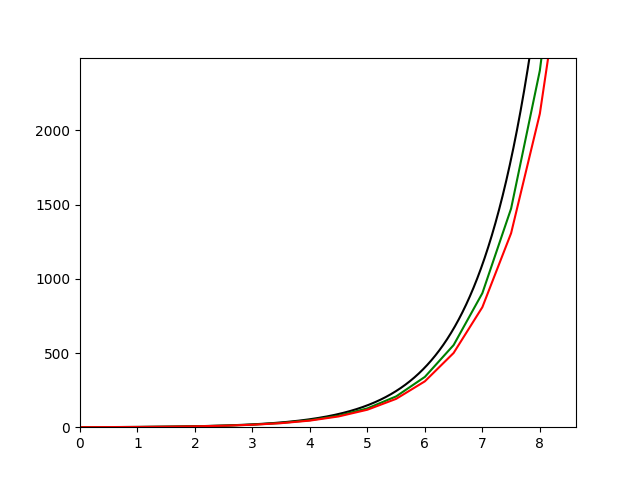}}
 \hspace{5mm}
 \subfigure[Negative exponential, $q=2$, $\theta=-\frac{3}{2}$]
   {\includegraphics[width=6.1cm]{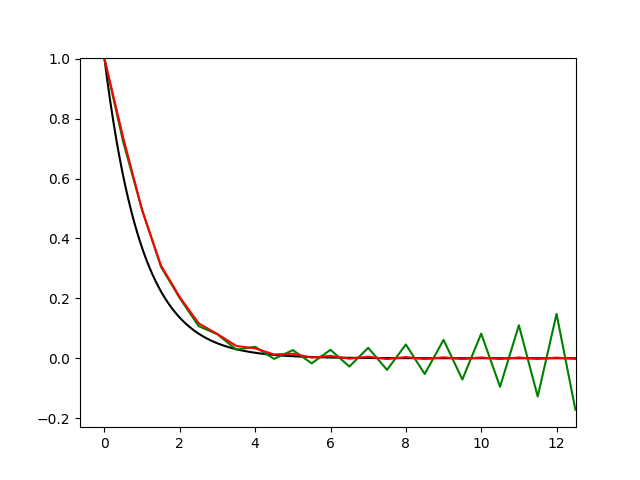}}
 \caption{In red: IOUP-solver, in green: IWP-solver. In black the solution (computed with a Runge Kutta method with very  small step size $h=0.0001$)}
 \end{figure}

\subsection{Oscillators}
We used an IOUP-prior to compute approximate oscillating solutions of ODEs in \textit{Problem 2. a)} and \textit{b)}. The original intuition was that an IOUP-solver would be a good prior for oscillators, because their trajectory resembles the 'mean-reverting' property and looks similar to the samples of an OUP. However, we do not always see better results of the IOUP-approximate solution. \cref{oscillators} depicts two different oscillators: in the plot (a) the IWP solver  distinctly approximates better the solution $y_0$ in \textit{Problem 2 a)}. On the other hand the Van der Pol oscillator (\textit{Problem 2 b)}) illustrated in \cref{oscillators} (b) is better approximated by an IOUP prior.
To motivate these results we argue that the similarity to the OUP samples is a global property of the solution's structure. However, the filter computes the approximate solution along the time axis 'locally', proceeding with small time steps. In particular the IOUP filter predicts with high probability the $q$-th derivative smaller at every time step, whereas the derivative of an oscillator is periodically changing and doesn't decay to zero or converges to a finite number. 

 \begin{figure} \label{oscillators}
 \centering
 \subfigure[ODE solution $y_0$ of \textit{Problem 2 a)}, $q=1$]
   {\includegraphics[width=6.335cm]{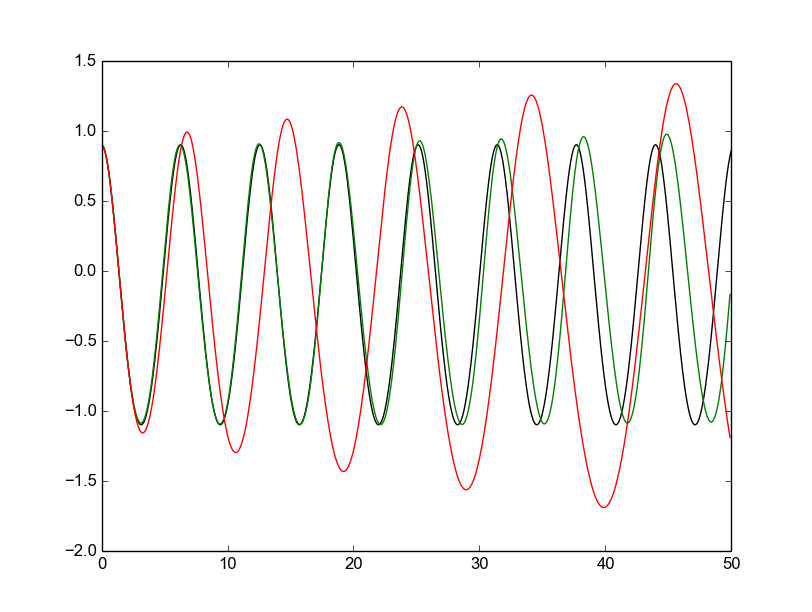}}
 \hspace{0.1mm}
 \subfigure[Van der Pol oscillator. $q=2$, $h=0.05$ ]
   {\includegraphics[width=6.35cm]{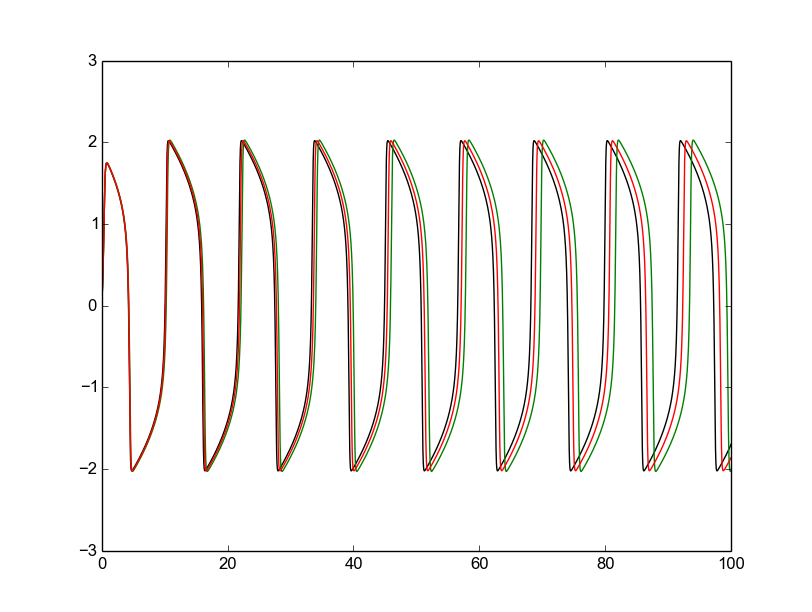}}
 \caption{Oscillators.  IOUP solver in red,  IWP-solver in green.}
 \end{figure}
 
\subsection{Moderate systems}
Problem \textit{3.} is a system of ODEs (of order 1 and dimension 10) which describes a radioactive decay chain. The ten solutions of the ODE problem are shown in \cref{fig:problem3solutions} : the trajectories  decay to zero for $t\rightarrow\infty$ (apart from $x_9(t)$ which converges to one). Again, as discussed for ODE \textit{1} $b)$, the solution functions look similar to the mean reverting samples of the OUP.  In \cref{orbitequations} we plotted two orbits of the ODE problem \textit{3}, with a step size of $h=0.1$ and $q=1$. While the IWP oscillates around the orbit, the IOUP solvers remains near to it. Moreover the more stiff is the orbit, the more we expect the Ornstein-Uhlenbeck to perform well. Indeed, the plots confirm that this intuition might be useful.

\begin{figure}[htbp] \label{fig:problem3solutions}
 \centering
  \includegraphics[width=.50\textwidth]{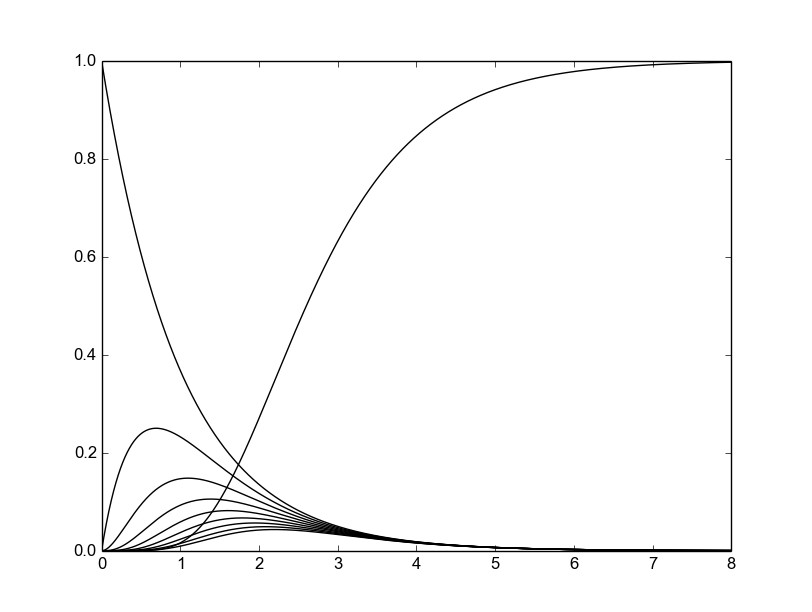}\caption{Solution functions $\lbrace x_0, \dots, x_9\rbrace$ to ODE Problem 3, computed with a Runge Kutta method with step size $h=0.0001$ }
\end{figure}

 \begin{figure} \label{orbitequations}
 \centering
 \subfigure[Orbit with the function $x_8(t)$ on the x-axis and $x_7(t)$ on the y-axis,$t \in (0,10)$ ]
   {\includegraphics[width=6.3cm]{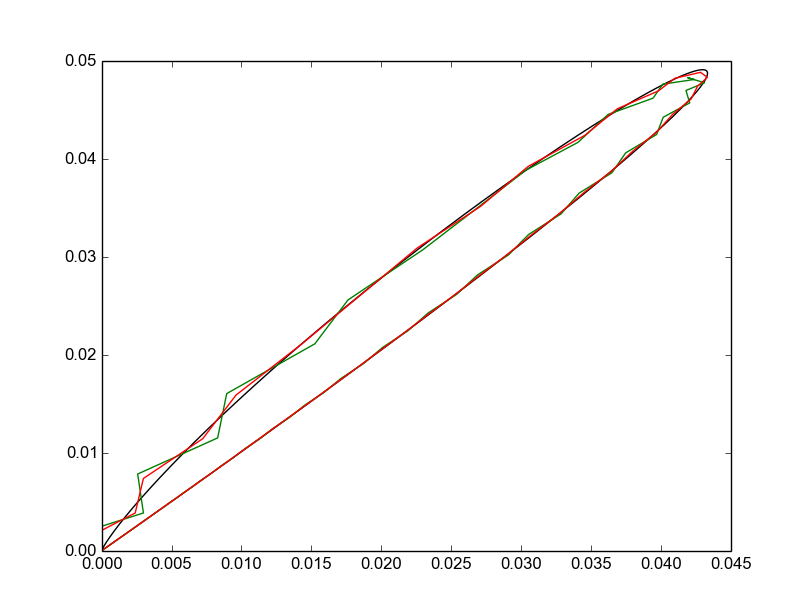}}
 \hspace{0.1mm}
 \subfigure[Orbit with the function $x_0(t)$ on the x-axis and $x_8(t)$ on the y-axis]
   {\includegraphics[width=6.3cm]{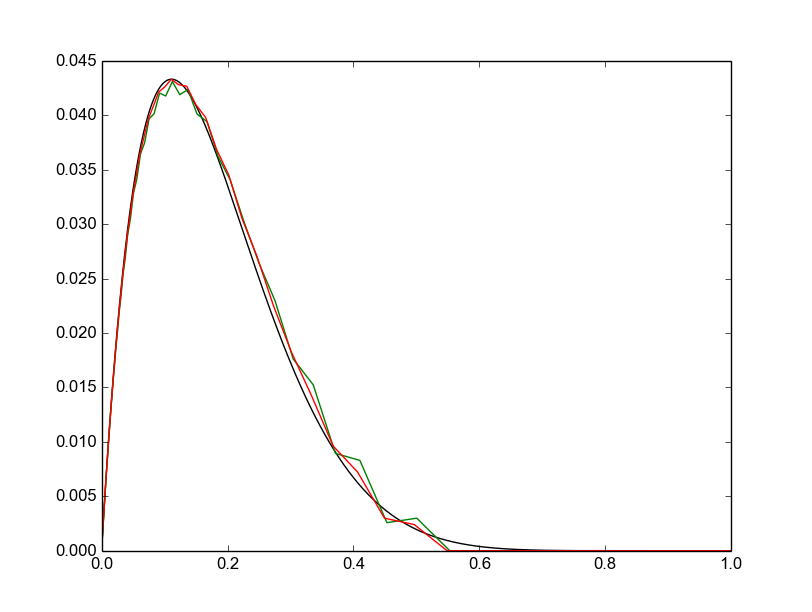}}
 \caption{Orbits of Problem 3. In red: IOUP-solver, in green: IWP solver. In black the solution (computed with a RK-method with very  small step size)}
 \end{figure}

\section{Discussion}
The experiments presented in \cref{experiments} provide good material to discuss the differences between the IOUP and the IWP solver. An IOUP prior appears to approximate better 'ODEs whith  bounded derivatives',
due to the drifting property which keeps the process near to the mean zero with high probability (in our model the $q$-th derivative of the solution). \cref{fig:linearodes} (b) and \cref{orbitequations} suggest this argument is reasonable: it distinctly shows how the approximate solution with an IWP prior oscillates unstably around the analytical solution due to the fact that it is less conservative than the OUP (in the sense that it tends to deviate away from its mean) whereas the IOUP solver remains closer to it.
In this perspective, an IWP works as a better prior for divergent ODEs, like e.g the exponential function (\cref{fig:linearodes} a)).       
Our intuition was not validated for oscillators (\cref{oscillators}). This is probably due to the fact that the trajectory of oscillators--- although it resembles the drifting property--- periodically changes the value of the derivative, whereas the IOUP-solver predicts a smaller derivative at each time step.

\section{Summary}
In the framework for Bayesian filtering we examined theory of probabilistic ODE solvers which return Gaussian process posterior distributions instead of point estimates and provide uncertainty measures over the solutions. We stressed the Bayesian nature of these solvers and discussed the question of prior selection. Hence we proposed a novel solver which employs an IOUP prior over the solution of ODEs, and provided experiments which show a competitive behaviour of this solver with the already well-established IWP-solver.


\bibliographystyle{siamplain}
\bibliography{./bibfile.bib}

\begin{thebibliography}{10}

\bibitem{o.13:_bayes_uncer_quant_differ_equat}
{\sc O.~Chkrebtii, D.~Campbell, M.~Girolami, and B.~Calderhead}, {\em
  {{B}ayesian Uncertainty Quantification for Differential Equations}}, arXiv
  prePrint 1306.2365,  (2013).

\bibitem{CGSSZ15}
{\sc P.~Conrad, M.~Girolami, S.~S{\"a}rkk{\"a}, A.~Stuart, and K.~Zygalakis},
  {\em {Statistical analysis of differential equations: introducing probability
  measures on numerical solutions}}, Statistics and Computing,  (2016),
  \url{https://doi.org/10.1007/s11222-016-9671-0}.

\bibitem{diaconis88:_bayes}
{\sc P.~Diaconis}, {\em {Bayesian numerical analysis}}, in {Statistical
  decision theory and related topics, {IV}, {V}ol.\ 1 ({W}est {L}afayette,
  {I}nd., 1986)}, Springer, New York, 1988, pp.~163--175.

\bibitem{hartikainen2010kalman}
{\sc J.~Hartikainen and S.~S{\"a}rkk{\"a}}, {\em {Kalman filtering and
  smoothing solutions to temporal {G}aussian process regression models}}, in
  {IEEE International Workshop on Machine Learning for Signal Processing
  (MLSP), 2010}, 2010, pp.~379--384.

\bibitem{HennigAISTATS2014}
{\sc P.~Hennig and S.~Hauberg}, {\em {Probabilistic Solutions to Differential
  Equations and their Application to Riemannian Statistics}}, in {Proc. of the
  17th int. Conf. on Artificial Intelligence and Statistics ({AISTATS})},
  vol.~33, JMLR, W\&CP, 2014.

\bibitem{HenOsbGirRSPA2015}
{\sc P.~Hennig, M.~Osborne, and M.~Girolami}, {\em {Probabilistic numerics and
  uncertainty in computations}}, Proceedings of the Royal Society of London A:
  Mathematical, Physical and Engineering Sciences, 471 (2015).

\bibitem{hull1972comparing}
{\sc T.~Hull, W.~Enright, B.~Fellen, and A.~Sedgwick}, {\em {Comparing
  numerical methods for ordinary differential equations}}, SIAM Journal on
  Numerical Analysis, 9 (1972), pp.~603--637.

\bibitem{karatzas1991brownian}
{\sc I.~Karatzas and S.~E. Shreve}, {\em {Brownian motion and stochastic
  calculus}}, vol.~113 of {Graduate Texts in Mathematics}, Springer-Verlag, New
  York, second~ed., 1991, \url{https://doi.org/10.1007/978-1-4612-0949-2},
  \url{http://dx.doi.org/10.1007/978-1-4612-0949-2}.

\bibitem{Kersting2016UAI}
{\sc H.~{Kersting} and P.~{Hennig}}, {\em {Active Uncertainty Calibration in
  Bayesian ODE Solvers}}, {Uncertainty in Artificial Intelligence (UAI)},
  (2016).

\bibitem{ohagan92:_some_bayes_numer_analy}
{\sc A.~O'Hagan}, {\em {Some {B}ayesian numerical analysis}}, in {Bayesian
  statistics, 4 ({P}e{\~n}{\'i} scola, 1991)}, Oxford Univ. Press, New York,
  1992, pp.~345--363.

\bibitem{oksendal2003stochastic}
{\sc B.~{\O}ksendal}, {\em {Stochastic {D}ifferential {E}quations: {A}n
  {I}ntroduction with {A}pplications}}, {Universitext}, Springer-Verlag,
  Berlin, sixth~ed., 2003, \url{https://doi.org/10.1007/978-3-642-14394-6},
  \url{http://dx.doi.org/10.1007/978-3-642-14394-6}.

\bibitem{poincare1896}
{\sc H.~Poincar{\'e}}, {\em {Calcul des probabilit{\'e}s}}, Gauthier-Villars,
  Paris, 1896.

\bibitem{sarkka2006thesis}
{\sc S.~S{\"a}rkk{\"a}}, {\em {{R}ecursive {B}ayesian {I}nference on
  {S}tochastic {D}ifferential {E}quations}}, PhD thesis, {H}elsinki
  {U}niversity of {T}echnology, 2006.

\bibitem{Sarkka2013bayesian}
{\sc S.~S{\"a}rkk{\"a}}, {\em {Bayesian filtering and smoothing}}, Cambridge
  University Press, 2013.

\bibitem{schober2014nips}
{\sc M.~{Schober}, D.~{Duvenaud}, and P.~{Hennig}}, {\em {Probabilistic ODE
  Solvers with Runge-Kutta Means}}, {Advances in Neural Information Processing
  Systems (NIPS)},  (2014).

\bibitem{2016arXiv161005261S}
{\sc M.~{Schober}, S.~{S{\"a}rkk{\"a}}, and P.~{Hennig}}, {\em {A probabilistic
  model for the numerical solution of initial value problems}}, ArXiv e-prints,
   (2016), \url{https://arxiv.org/abs/1610.05261}.

\end{thebibliography}
\newpage
\section{Appendix} 

\subsection{Matrices $A(h)$ and $Q(h)$ or IOUP}

It is easy to see that
\begin{equation}
 A(h)_{i,j} := {e^{hF}}_{i,j} = \begin{cases}
  \mathbbm{1}_{j\geq i} \frac{h^{j-i}}{(j-i)!}, & \mbox{if }j\neq q \\ \frac{1}{\theta^{q-i}}\left( e^{\theta h} - \sum_{k=0}^{q-i-1}\frac{{(\theta h)}^k}{k!}\right) , & \mbox{if }j=q

\end{cases} 
\end{equation}

Let's calculate $Q(h)$ given by \cite{sarkka2006thesis}
\begin{equation}
Q(h) = \int_0^h exp(F\tau)LQL^Texp{(F\tau)}^T d\tau
\end{equation}

Where \begin{equation*}
F = \begin{pmatrix}
0&1&0&\cdots&0\\
0&0&1&\cdots&0\\
\vdots& & \ddots&\ddots&\vdots \\
0& &\cdots&0&1\\
0&0&\dots&0&\theta\\
\end{pmatrix}   \  \ \  L = \begin{pmatrix}
0\\0\\\vdots\\0\\\sigma\\
\end{pmatrix} 
\end{equation*} 
with $\theta < 0 $. We assume $Q=I$. We have

\begin{equation*}
{LQL^T}_{i,j} = \begin{cases} \sigma^2 & \mbox{if }j=i= q \\  0 & \mbox{otherwise }
\end{cases}
\end{equation*}
so
\begin{equation*}
{exp(F\tau)LQL^T}_{i,j} = \begin{cases} \sigma^2\sum_k^\infty \frac{\theta^k h^{k+q-i}}{(k+q-i)!} = \frac{1}{\theta^{q-i}}\left( e^{\theta h} - \sum_{k=0}^{q-i-1}\frac{{(\theta h)}^k}{k!}\right) & \mbox{if }j= q \\  0 & \mbox{otherwise }
\end{cases}
\end{equation*}
and
\begin{equation*}
{exp(F\tau)LQL^Texp{(F\tau)}^T}_{i,j} =  \sum_{k=0}^{+\infty} \frac{\theta^k \tau^{k+q-i}}{(k+q-i)!} \sum_{k=0}^{+\infty} \frac{\theta^k \tau^{k+q-j}}{(k+q-j)!} 
\end{equation*}

Then
\begin{align*}
&Q_{i,j}(h)= \sigma^2 \int_0^h \Big( \sum_{k=0}^{+\infty} \frac{\theta^k \tau^{k+q-i}}{(k+q-i)!} \Big)\Big( \sum_{k=0}^{+\infty} \frac{\theta^k \tau^{k+q-j}}{(k+q-j)!} \Big) d\tau \\ &=  \sigma^2 \int_0^h \Big( \frac{1}{\theta^{q-i}} \sum_{k=0}^{+\infty} \frac{{(\theta\tau) }^{k+q-i}}{(k+q-i)!} \Big)\Big( \frac{1}{\theta^{q-j}} \sum_{k=0}^{+\infty} \frac{{(\theta\tau) }^{k+q-j}}{(k+q-j)!} \Big) d\tau  \\
&= \sigma^2  \int_0^h \left( \frac{1}{\theta^{q-i}} \left( e^{\theta\tau} - \sum_{k=0}^{q-i-1} \frac{{(\theta\tau)}^k}{k!}\right) \right) \left(\frac{1}{\theta^{q-j}} \left( e^{\theta\tau} - \sum_{k=0}^{q-j-1} \frac{{(\theta\tau)}^k}{k!}\right) \right) \\&= \sigma^2 \int_0^h \left( \frac{e^{2\theta\tau}}{\theta^{2q-i-j}} - \frac{e^{\theta\tau}}{\theta^{2q-i-j}}\sum_{k=0}^{q-i-1} \frac{{(\theta\tau)}^k}{k!} - \frac{e^{\theta\tau}}{\theta^{2q-i-j}}\sum_{k=0}^{q-j-1} \frac{{(\theta\tau)}^k}{k!} + \frac{1}{\theta^{2q-i-j}}\sum_{k=0}^{q-i-1} \frac{{(\theta\tau)}^k}{k!}\sum_{k=0}^{q-j-1} \frac{{(\theta\tau)}^k}{k!}\right)\\& =
 \frac{\sigma^2}{\theta^{2q-i-j}} \left\lbrace   \left( \frac{e^{2\theta h}-1}{2\theta}\right) -  \sum_{k=0}^{q-i-1}\underbrace{\frac{\theta^k}{k!}\int_0^h e^{\theta\tau}\tau^k d\tau}_{\emph{I}_k (h)} -  \sum_{k=0}^{q-j-1}\underbrace{\frac{\theta^k}{k!} \int_0^h e^{\theta\tau}\tau^k d\tau}_{\emph{I}_k (h)} + \sum_{k_1=0}^{q-i-1}\sum_{k_2=0}^{q-j-1}\frac{\theta^{k_1+k_2}}{k_1!k_2!}\int_0^h \tau^{k_1+k_2} d\tau \right\rbrace    
\end{align*}

Writing 
\begin{equation*}
{I}_k (h) = \frac{\theta^{k-1}e^{\theta h}h^k}{k!} - \emph{I}_{k-1} (h)  \ \    \text{with}  \ \ \   {I}_0 (h) = \int_0^h e^{\theta\tau}d\tau = \frac{1}{\theta} \left(  e^{\theta h} - 1\right) 
\end{equation*}
we get \begin{equation}\begin{split}
&Q_{i,j}(h)=  \frac{\sigma^2}{\theta^{2q-i-j}}\Bigg\lbrace \frac{e^{2\theta h}-1}{2\theta} - \sum_{k=0}^{q-i-1}\left[  (-1)^{k}\frac{\left(  e^{\theta h} - 1\right)}{\theta}  + \sum_{l=1}^{k} (-1)^{k-l}\frac{\theta^{l-1}e^{\theta h}h^l}{l!}\right] \\& 
- \sum_{k=0}^{q-j-1}{\left[{(-1)}^{k}\frac{\left(  e^{\theta h} - 1\right)}{\theta}  + \sum_{l=1}^k (-1)^{k-l} \frac{\theta^{l-1}e^{\theta h}h^l}{l!}\right]} + \sum_{k_1=0}^{q-i-1}\sum_{k_2=0}^{q-j-1} \frac{{\theta}^{k_1+k_2}}{k_1! k_2!}  \frac{h^{k_1+k_2+1}}{k_1+k_2+1} \Bigg\rbrace
\end{split}\end{equation}
\subsection{IOUP local convergence rate for the posterior mean}
 If we employ a $q$-times integrated OUP over the solution $x$, where $q \in \mathbb{N}$, the prediction step for the mean with initial value $m= (x_0,f(0,x_0), f'(0,x_0), \dots , f^{(q)}(0,x_0))^T := (m_0, \dots ,m_q)$  
is   $m^-(h) = A(h)m$ where

\begin{equation}
 A(h)_{i,j} = {e^{hF}}_{i,j} = \begin{cases}
  \mathbbm{1}_{j\geq i} \frac{h^{j-i}}{(j-i)!}, & \mbox{if }j\neq q \\ \frac{1}{\theta^{q-i}}\left( e^{\theta h} - \sum_{k=0}^{q-i-1}\frac{{(\theta h)}^k}{k!}\right) , & \mbox{if }j=q \\

\end{cases}  
\end{equation}

\subsection{Variances of IOUP and IWP}
Let $X_t$ denote an Ornstein Uhlenbeck process with  and $W_t$ a standard Wiener process. In this section we will write $-\theta$ with $\theta>0$ for simplicity in the calculations. We want to show :\begin{equation*}
\underbrace{Var(\int_0^T X_s ds )}_{a)} \leq \underbrace{Var(\int_0^T W_s ds )}_{b)}
\end{equation*}
We calculate a) and b):
\begin{itemize}
\item[a)] \begin{align*}
&Var(\int_0^T X_s ds ) = \mathbb{E}\left[ {\left( \int_0^T X_s ds\right) }^2 \right] -{ \left( \mathbb{E}\left[ \int_0^T X_s ds \right]\right)  }^2  = \mathbb{E}\left[ {\left( \int_0^T X_s ds\right) }^2 \right] - \underbrace{{ \left(  \int_0^T \underbrace{\mathbb{E}[ X_s]}_{=0} ds \right)  }^2}_{=0}=\\
& = \mathbb{E}\int_0^T\int_0^T X_s X_t ds dt = \int_0^T\int_0^T \mathbb{E}[X_s X_t] ds dt \stackrel{\text{$\mathbb{E}[X_t]= \mathbb{E}[X_s]=0  $}}{=} \int_0^T\int_0^T Cov(X_s,X_t) ds dt = \\
&=\frac{\sigma^2}{2\theta} \int_0^T\int_0^T(e^{-\theta\mid t-s \mid} - e^{-\theta(t+s)})ds dt = \frac{\sigma^2}{2\theta} \left(  \underbrace{\int_0^T\int_0^Te^{-\theta\mid t-s \mid} ds dt}_{\textit{1)}} -  \underbrace{\int_0^T\int_0^T  e^{-\theta(t+s)}dsdt}_{\textit{2)}}\right) 
\end{align*}
\begin{itemize}
\item [\textit{1)}]\begin{align*}
&\int_0^T\int_0^Te^{-\theta\mid t-s \mid} ds dt = \int_0^T\left( \int_0^s e^{-\theta( s-t )}dt + \int_s^T e^{-\theta( t-s )}dt\right) ds = \int_0^T \left( \frac{1-e^{-\theta s}}{\theta} + \frac{1-e^{-\theta (T-s)}}{\theta} \right)ds =\\
&= \int _0^T \frac{2}{\theta}ds - \int _0^T \frac{e^{-\theta s}}{\theta}ds -\int _0^T \frac{e^{-\theta (T-s)}}{\theta}ds = 
\frac{2T}{\theta} +\frac{e^{-\theta T}-1}{\theta^2} - \frac{1-e^{-\theta T}}{\theta^2} = \frac{2}{\theta}\left(  T +\frac{ e^{-\theta T}-1}{\theta}\right) 
\end{align*}
\item [\textit{2)}]\begin{align*}
&\int_0^T\int_0^T  e^{-\theta(t+s)}dsdt = \int_0^T \left( \frac{1}{\theta} (e^{-\theta s}-e^{-\theta(T+s)})\right) ds = \frac{1}{\theta}\left( \int_0^T e^{-\theta s} - \int_0^T e^{-\theta(T+s)})\right) =\\
&= \frac{1}{\theta}\left(  \frac{1- e^{-\theta T}
}{\theta} - \frac{e^{-\theta T}-e^{-2\theta T}
}{\theta}\right) = \frac{1}{\theta^2}{\left( 1-e^{-\theta T} \right) }^2  
\end{align*}
\end{itemize}

So \begin{align*}
&Var(\int_0^T X_s ds ) = \frac{\sigma^2}{2\theta} \left( \frac{2}{\theta}\left(  T +\frac{ e^{-\theta T}-1}{\theta}\right) - \frac{1}{\theta^2}{\left( 1-e^{-\theta T} \right) }^2  \right) = \frac{\sigma^2}{2\theta^3}\left( 2\theta T +2(e^{- \theta T}-1) - {(1-e^{-\theta T})}^2\right) 
\end{align*}
\item[b)] \begin{align*}
&Var(\int_0^T W_s ds ) = \int_0^T\int_0^T Cov(X_s,X_t) ds dt = \int_0^T\int_0^T min(s,t) ds dt =  \int_0^T \left( \int_0^s t dt + \int_s^T s dt \right)ds  =\\
& =\int_0^T (\frac{s^2}{2} +s(T-s) ) ds = \frac{T^3}{3}
\end{align*}
\end{itemize}
So we can see that 
\begin{equation*}
\frac{\sigma^2}{2\theta^3}\left( 2\theta T +2(e^{- \theta T}-1) - {(1-e^{-\theta T})}^2\right)  \leq \frac{T^3}{3} \ \ \ \ \ \forall T >0.
\end{equation*}

\end{document}